\providecommand{\U}[1]{\protect\rule{.1in}{.1in}}
\theoremstyle{definition}
\newtheorem{theo}{Theorem}[section]
\newenvironment{theorem}[1][]
{\begin{theo}[#1]\begin{leftbar}}
{\end{leftbar}\end{theo}}
\newtheorem{lem}[theo]{Lemma}
\newenvironment{lemma}[1][]
{\begin{lem}[#1]\begin{leftbar}}
{\end{leftbar}\end{lem}}
\newtheorem{prop}[theo]{Proposition}
\newenvironment{proposition}[1][]
{\begin{prop}[#1]\begin{leftbar}}
{\end{leftbar}\end{prop}}
\newtheorem{defi}[theo]{Definition}
\newtheorem{remk}[theo]{Remark}
\newtheorem{coro}[theo]{Corollary}
\newtheorem{conv}[theo]{Convention}
\newtheorem{quest}[theo]{Question}
\newenvironment{question}[1][]
{\begin{quest}[#1]\begin{leftbar}}
{\end{leftbar}\end{quest}}
\newtheorem{warn}[theo]{Warning}
\newenvironment{warning}[1][]
{\begin{warn}[#1]\begin{leftbar}}
{\end{leftbar}\end{warn}}
\newtheorem{conj}[theo]{Conjecture}
\newtheorem{exam}[theo]{Example}
\newenvironment{proof}[1][Proof]{\noindent\textbf{#1.} }{\ \rule{0.5em}{0.5em}}
\let\sumnonlimits\sum
\let\prodnonlimits\prod
\let\cupnonlimits\bigcup
\let\capnonlimits\bigcap
\renewcommand{\sum}{\sumnonlimits\limits}
\renewcommand{\prod}{\prodnonlimits\limits}
\renewcommand{\bigcup}{\cupnonlimits\limits}
\renewcommand{\bigcap}{\capnonlimits\limits}
\newenvironment{noncompile}{}{}
\NOEXPAND{\today}{\today}
\begin{document}

\title{The entry sum of the inverse Cauchy matrix}
\author{Darij Grinberg\thanks{Drexel University, Korman Center, 15 S 33rd Street,
Philadelphia PA, 19104, USA}}
\date{\today}
\maketitle

\section{The Cauchy matrix}

Let $x_{1},x_{2},\ldots,x_{n}$ be $n$ numbers, and $y_{1},y_{2},\ldots,y_{n}$
be $n$ further numbers chosen such that all $n^{2}$ pairwise sums $x_{i}%
+y_{j}$ are nonzero\footnote{Algebraists can replace the words
\textquotedblleft number\textquotedblright\ and \textquotedblleft
nonzero\textquotedblright\ by \textquotedblleft element of a commutative
ring\textquotedblright\ and \textquotedblleft invertible\textquotedblright,
respectively. This generalization comes for free; we will not use anything
specific to any kind of numbers in our proofs.}. Consider the $n\times
n$-matrix%
\[
C:=\left(  \dfrac{1}{x_{i}+y_{j}}\right)  _{1\leq i\leq n,\ 1\leq j\leq
n}=\left(
\begin{array}
[c]{cccc}%
\dfrac{1}{x_{1}+y_{1}} & \dfrac{1}{x_{1}+y_{2}} & \cdots & \dfrac{1}%
{x_{1}+y_{n}}\\
\dfrac{1}{x_{2}+y_{1}} & \dfrac{1}{x_{2}+y_{2}} & \cdots & \dfrac{1}%
{x_{2}+y_{n}}\\
\vdots & \vdots & \ddots & \vdots\\
\dfrac{1}{x_{n}+y_{1}} & \dfrac{1}{x_{n}+y_{2}} & \cdots & \dfrac{1}%
{x_{n}+y_{n}}%
\end{array}
\right)  .
\]
This matrix $C$ is known as the \emph{Cauchy matrix}, and has been studied for
180 years\footnote{Many authors define it to have entries $\dfrac{1}%
{x_{i}-y_{j}}$ instead of $\dfrac{1}{x_{i}+y_{j}}$. This boils down to
replacing $y_{1},y_{2},\ldots,y_{n}$ by $-y_{1},-y_{2},\ldots,-y_{n}$.}. The
first significant result was the formula for its determinant:%
\begin{equation}
\det C=\frac{\prod_{1\leq i<j\leq n}\left(  \left(  x_{i}-x_{j}\right)
\left(  y_{i}-y_{j}\right)  \right)  }{\prod_{\left(  i,j\right)  \in\left\{
1,2,\ldots,n\right\}  ^{2}}\left(  x_{i}+y_{j}\right)  } \label{detC=}%
\end{equation}
found by Cauchy in 1841 \cite{Cauchy} (see, e.g., \cite[\S 1.3]{Prasolov} or
\cite[Exercise 6.18 or Exercise 6.64]{detnotes} for modern proofs). Newer
research focuses, e.g., on the LU decomposition \cite{GohKol}, positivity
properties \cite{Fiedler}, or generalizations \cite{IOTZ}. See \cite{Gow} for
more on the history of the topic and for its connections to Lagrange
interpolation (and for another proof of (\ref{detC=})). Applications range
from the theoretical (an equivalent version \cite[Lemma 5.15.3]{Etingof} of
(\ref{detC=}) is used in the classical representation theory of symmetric
groups) to the practical (computing the inverse $C^{-1}$ is a notoriously
ill-conditioned problem that is used as a canary for numerical instability
\cite{Todd}).

\section{The sum of the entries of the inverse}

The following curious result appears to be known since at least the 1940s:

\begin{theorem}
\label{thm.sum-inv}Assume that the matrix $C$ is invertible. Then, the sum of
all entries of its inverse $C^{-1}$ is $\sum_{k=1}^{n}x_{k}+\sum_{k=1}%
^{n}y_{k}$.
\end{theorem}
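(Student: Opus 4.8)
The plan is to read the entry sum off as a single quadratic form. Writing $\mathbf{s}=\left(1,1,\ldots,1\right)^{\mathsf{T}}$ for the all-ones column vector, the sum of all entries of $C^{-1}$ is exactly $\mathbf{s}^{\mathsf{T}}C^{-1}\mathbf{s}$. Setting $u:=C^{-1}\mathbf{s}=\left(u_{1},\ldots,u_{n}\right)^{\mathsf{T}}$, this sum becomes $\mathbf{s}^{\mathsf{T}}u=\sum_{j=1}^{n}u_{j}$, so I never need the individual entries of $C^{-1}$: it suffices to solve the linear system $Cu=\mathbf{s}$ and then add up the coordinates of $u$. Spelled out, the system $Cu=\mathbf{s}$ says that
\[
\sum_{j=1}^{n}\dfrac{u_{j}}{x_{i}+y_{j}}=1\qquad\text{for every }i\in\left\{1,2,\ldots,n\right\}.
\]

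Before solving it, I would note that invertibility hands us distinctness for free. By Cauchy's formula \eqref{detC=}, $\det C\neq0$ forces the numerator $\prod_{1\leq i<j\leq n}\left(\left(x_{i}-x_{j}\right)\left(y_{i}-y_{j}\right)\right)$ to be nonzero, so the $x_{i}$ are pairwise distinct and so are the $y_{j}$. This is precisely what lets me convert the $n$ scalar equations above into a single polynomial identity.

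The key idea is to encode the system through the rational function $f\left(t\right):=\sum_{j=1}^{n}\dfrac{u_{j}}{t+y_{j}}$, for which the system reads $f\left(x_{i}\right)=1$ for all $i$. Clearing denominators, I would introduce the polynomial
\[
g\left(t\right):=\prod_{j=1}^{n}\left(t+y_{j}\right)-\sum_{j=1}^{n}u_{j}\prod_{k\neq j}\left(t+y_{k}\right),
\]
which agrees with $\left(1-f\left(t\right)\right)\prod_{j=1}^{n}\left(t+y_{j}\right)$ away from the poles. Here lies the one point that needs genuine care: the subtracted sum has degree at most $n-1$, so $g$ is \emph{monic of degree exactly} $n$, and the conditions $f\left(x_{i}\right)=1$ say $g\left(x_{i}\right)=0$. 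Since $x_{1},\ldots,x_{n}$ are $n$ distinct roots of a monic degree-$n$ polynomial, they determine it completely: $g\left(t\right)=\prod_{i=1}^{n}\left(t-x_{i}\right)$. I expect this degree bookkeeping — making sure the two leading pieces do not conspire and that the $n$ roots really pin down $g$ — to be the main obstacle, or at least the only step where something could go wrong.

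Having this identity, the conclusion is a one-line coefficient comparison rather than a messy summation of the inverse entries. Comparing the coefficients of $t^{n-1}$ in $g\left(t\right)=\prod_{i=1}^{n}\left(t-x_{i}\right)$: on the left the product $\prod_{j=1}^{n}\left(t+y_{j}\right)$ contributes $\sum_{j=1}^{n}y_{j}$ and the subtracted sum contributes $\sum_{j=1}^{n}u_{j}$ (each $\prod_{k\neq j}\left(t+y_{k}\right)$ being monic of degree $n-1$), giving $\sum_{j=1}^{n}y_{j}-\sum_{j=1}^{n}u_{j}$; on the right the coefficient is $-\sum_{i=1}^{n}x_{i}$. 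Equating the two yields $\sum_{j=1}^{n}u_{j}=\sum_{i=1}^{n}x_{i}+\sum_{j=1}^{n}y_{j}$, which is exactly the claimed entry sum $\sum_{k=1}^{n}x_{k}+\sum_{k=1}^{n}y_{k}$.
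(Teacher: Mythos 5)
Your proof is correct, but it takes a genuinely different route from the one in the paper. You reduce the entry sum to the linear system $Cu=\mathbf{s}$ and solve it by a polynomial identity: this is essentially the Lagrange-interpolation argument (close in spirit to Schechter's proof cited in the text), whereas the paper's proof rests on the purely formal trace identity $\sum_{i,j}\left(x_{i}+y_{j}\right)A_{i,j}B_{j,i}=\sum_{i}x_{i}\left(AB\right)_{i,i}+\sum_{j}y_{j}\left(BA\right)_{j,j}$ applied to $A=C$, $B=C^{-1}$, which makes the claim fall out in two lines without ever touching $\det C$, distinctness of the parameters, or any polynomial algebra. Your degree bookkeeping is right where you say the danger lies, and it does go through: $g$ is monic of degree $n$ because the subtracted sum has degree at most $n-1$, the $x_{i}$ are legitimate evaluation points since no $x_{i}+y_{j}$ vanishes, and $n$ distinct roots of a monic degree-$n$ polynomial do determine it. (Two small remarks: you never actually use the distinctness of the $y_{j}$, only that of the $x_{i}$; and the latter can be obtained more cheaply than via \eqref{detC=} by noting that $x_{i}=x_{j}$ would make two rows of $C$ equal.) What your approach buys is more information: the identity $g\left(t\right)=\prod_{i=1}^{n}\left(t-x_{i}\right)$ determines every coordinate $u_{j}$, i.e., all the individual row sums of $C^{-1}$, not just their total. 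What the paper's approach buys is economy and generality: it needs no invertibility of differences, works verbatim over any commutative ring, and adapts immediately to the adjugate version (Theorem \ref{thm.sum-adj}).
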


A natural, yet laborious approach to proving this theorem is to compute the
entries of $C^{-1}$ using (\ref{detC=}), and then to add them up. The
resulting sum can be seen (by a tricky induction) to simplify to $\sum
_{k=1}^{n}x_{k}+\sum_{k=1}^{n}y_{k}$. Some details of this proof can be found
in \cite[\S 1.2.3, Exercise 44]{Knuth-TAoCP1}. The proof given in
\cite[(13)]{Schechter} is simpler, avoiding the use of (\ref{detC=}) but
relying on Lagrange interpolation theory instead.

\begin{noncompile}
A natural approach to proving this theorem is to compute the entries of
$C^{-1}$; this can be done using (\ref{detC=}) and the standard formula
$A^{-1}=\dfrac{1}{\det A}\cdot\operatorname*{adj}A$ for the inverse of a
matrix $A$ (where $\operatorname*{adj}A$ denotes the adjugate of $A$). This
leads to the formula%
\[
\left(  C^{-1}\right)  _{i,j}=\dfrac{\prod_{k=1}^{n}\left(  x_{j}%
+x_{k}\right)  \left(  x_{k}+y_{i}\right)  }{\left(  x_{j}+y_{i}\right)
\cdot\prod_{k\in\left\{  1,2,\ldots,n\right\}  \setminus\left\{  j\right\}
}\left(  x_{j}-x_{k}\right)  \cdot\prod_{k\in\left\{  1,2,\ldots,n\right\}
\setminus\left\{  i\right\}  }\left(  y_{i}-y_{k}\right)  }%
\]
for the $\left(  i,j\right)  $-th entry of $C^{-1}$ (see \cite[\S 1.2.3,
Exercise 41]{Knuth-TAoCP1}). Summing over all $i,j$ leads to a complicated
expression which -- by a tricky induction -- can be seen to simplify to
$\sum_{k=1}^{n}x_{k}+\sum_{k=1}^{n}y_{k}$. Some details of this proof can be
found in \cite[\S 1.2.3, Exercise 44]{Knuth-TAoCP1}. The proof given in
\cite[(13)]{Schechter} is simpler, avoiding the use of (\ref{detC=}) but
relying on Lagrange interpolation theory instead.
\end{noncompile}

We propose a new proof of Theorem \ref{thm.sum-inv}, which reflects the
simplicity of the theorem. We let $A_{i,j}$ denote the $\left(  i,j\right)
$-th entry of any matrix $A$. The following simple lemma gets us half the way:

\begin{lemma}
\label{lem.AB}Let $A$ be an $n\times m$-matrix, and let $B$ be an $m\times
n$-matrix. Then,%
\[
\sum_{i=1}^{n}\ \ \sum_{j=1}^{m}\left(  x_{i}+y_{j}\right)  A_{i,j}%
B_{j,i}=\sum_{i=1}^{n}x_{i}\left(  AB\right)  _{i,i}+\sum_{j=1}^{m}%
y_{j}\left(  BA\right)  _{j,j}.
\]

\end{lemma}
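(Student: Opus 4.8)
The plan is to prove this by a direct computation that rests on nothing more than distributivity and the definition of matrix multiplication. First I would split the coefficient $x_{i}+y_{j}$ and distribute it across the product $A_{i,j}B_{j,i}$, so that the left-hand side becomes
\[
\sum_{i=1}^{n}\ \sum_{j=1}^{m}x_{i}A_{i,j}B_{j,i}+\sum_{i=1}^{n}\ \sum_{j=1}^{m}y_{j}A_{i,j}B_{j,i}.
\]
I would then treat the two resulting double sums separately, since each matches one of the two summands on the right-hand side.

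For the first double sum, I would pull $x_{i}$ out of the inner sum over $j$ (it does not depend on $j$), leaving $\sum_{i=1}^{n}x_{i}\sum_{j=1}^{m}A_{i,j}B_{j,i}$. The inner sum $\sum_{j=1}^{m}A_{i,j}B_{j,i}$ is exactly the definition of the diagonal entry $\left(AB\right)_{i,i}$, so this piece equals $\sum_{i=1}^{n}x_{i}\left(AB\right)_{i,i}$. For the second double sum, I would swap the order of summation and pull $y_{j}$ out of the inner sum over $i$, obtaining $\sum_{j=1}^{m}y_{j}\sum_{i=1}^{n}A_{i,j}B_{j,i}$. Here I would use commutativity of scalar multiplication to rewrite $A_{i,j}B_{j,i}=B_{j,i}A_{i,j}$, so that the inner sum $\sum_{i=1}^{n}B_{j,i}A_{i,j}$ is the definition of the diagonal entry $\left(BA\right)_{j,j}$; this piece therefore equals $\sum_{j=1}^{m}y_{j}\left(BA\right)_{j,j}$. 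Adding the two pieces yields the claimed identity.

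I do not expect any genuine obstacle here: the lemma is essentially a bookkeeping statement, and the only point requiring the slightest care is keeping the index roles straight when recognizing $\left(BA\right)_{j,j}$ (note that $B$ has rows indexed by $j$ and columns by $i$, so that $\left(BA\right)_{j,j}=\sum_{i=1}^{n}B_{j,i}A_{i,j}$, matching the second double sum after reindexing). The commutativity step is what makes the argument work over an arbitrary commutative ring, consistent with the generality announced in the footnote.
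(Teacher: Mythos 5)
Your proof is correct and follows essentially the same route as the paper's: split $x_i+y_j$ by distributivity, pull $x_i$ out and recognize $\sum_j A_{i,j}B_{j,i}=(AB)_{i,i}$, then swap the summation order, commute the factors, and recognize $\sum_i B_{j,i}A_{i,j}=(BA)_{j,j}$. Nothing is missing.
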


\begin{proof}
[Proof of Lemma \ref{lem.AB}.]We have%
\begin{align*}
\sum_{i=1}^{n}\ \ \sum_{j=1}^{m}\left(  x_{i}+y_{j}\right)  A_{i,j}B_{j,i}  &
=\sum_{i=1}^{n}\ \ \sum_{j=1}^{m}x_{i}A_{i,j}B_{j,i}+\underbrace{\sum
_{i=1}^{n}\ \ \sum_{j=1}^{m}}_{=\sum_{j=1}^{m}\ \ \sum_{i=1}^{n}}%
y_{j}\underbrace{A_{i,j}B_{j,i}}_{=B_{j,i}A_{i,j}}\\
&  =\sum_{i=1}^{n}\ \ \sum_{j=1}^{m}x_{i}A_{i,j}B_{j,i}+\sum_{j=1}^{m}%
\ \ \sum_{i=1}^{n}y_{j}B_{j,i}A_{i,j}\\
&  =\sum_{i=1}^{n}x_{i}\underbrace{\sum_{j=1}^{m}A_{i,j}B_{j,i}}%
_{\substack{=\left(  AB\right)  _{i,i}\\\text{(by the definition
of}\\\text{the matrix product)}}}+\sum_{j=1}^{m}y_{j}\underbrace{\sum
_{i=1}^{n}B_{j,i}A_{i,j}}_{\substack{=\left(  BA\right)  _{j,j}\\\text{(by the
definition of}\\\text{the matrix product)}}}\\
&  =\sum_{i=1}^{n}x_{i}\left(  AB\right)  _{i,i}+\sum_{j=1}^{m}y_{j}\left(
BA\right)  _{j,j}.
\end{align*}

\end{proof}

\begin{proof}
[Proof of Theorem \ref{thm.sum-inv}.]Applying Lemma \ref{lem.AB} to $m=n$,
$A=C$ and $B=C^{-1}$, we obtain%
\begin{align*}
\sum_{i=1}^{n}\ \ \sum_{j=1}^{n}\left(  x_{i}+y_{j}\right)  C_{i,j}\left(
C^{-1}\right)  _{j,i}  &  =\sum_{i=1}^{n}x_{i}\underbrace{\left(
CC^{-1}\right)  _{i,i}}_{\substack{=1\\\text{(since }CC^{-1}\text{ is
the}\\\text{identity matrix)}}}+\sum_{j=1}^{n}y_{j}\underbrace{\left(
C^{-1}C\right)  _{j,j}}_{\substack{=1\\\text{(since }C^{-1}C\text{ is
the}\\\text{identity matrix)}}}\\
&  =\sum_{i=1}^{n}x_{i}+\sum_{j=1}^{n}y_{j}=\sum_{k=1}^{n}x_{k}+\sum_{k=1}%
^{n}y_{k}.
\end{align*}
However, the factor $\left(  x_{i}+y_{j}\right)  C_{i,j}$ on the left hand
side of this equality simplifies to $1$ (since the definition of $C$ yields
$C_{i,j}=\dfrac{1}{x_{i}+y_{j}}$). Thus, the left hand side of this equality
is $\sum_{i=1}^{n}\ \ \sum_{j=1}^{n}\underbrace{\left(  x_{i}+y_{j}\right)
C_{i,j}}_{=1}\left(  C^{-1}\right)  _{j,i}=\sum_{i=1}^{n}\ \ \sum_{j=1}%
^{n}\left(  C^{-1}\right)  _{j,i}$, which is clearly the sum of all entries of
$C^{-1}$. We have thus shown that the sum of all entries of $C^{-1}$ is
$\sum_{k=1}^{n}x_{k}+\sum_{k=1}^{n}y_{k}$. This proves Theorem
\ref{thm.sum-inv}.
\end{proof}

\section{Variants}

Theorem \ref{thm.sum-inv} was stated under the assumption that $C$ be
invertible. Using (\ref{detC=}), it is easy to see that this assumption is
equivalent to requiring that $x_{1},x_{2},\ldots,x_{n}$ be distinct and that
$y_{1},y_{2},\ldots,y_{n}$ be distinct\footnote{Algebraists working over an
arbitrary commutative ring should read \textquotedblleft
distinct\textquotedblright\ as \textquotedblleft strongly
distinct\textquotedblright\ (where two elements $a,b$ of a ring are said to be
\emph{strongly distinct} if their difference $a-b$ is invertible).}. It is not
hard to relieve Theorem \ref{thm.sum-inv} of this assumption: Just replace the
inverse $C^{-1}$ (which no longer exists) by the adjugate\footnote{The
\emph{adjugate} $\operatorname*{adj}A$ of an $n\times n$-matrix $A$ is the
$n\times n$-matrix whose $\left(  i,j\right)  $-th entry is $\left(
-1\right)  ^{i+j}\det\left(  A_{\sim j,\sim i}\right)  $, where $A_{\sim
j,\sim i}$ is the result of removing the $j$-th row and the $i$-th column from
$A$. Older texts often refer to the adjugate as the \textquotedblleft
classical adjoint\textquotedblright\ (or just as the \textquotedblleft
adjoint\textquotedblright, which however has another meaning as well).}
$\operatorname*{adj}C$ of the matrix $C$. The resulting theorem is as follows:

\begin{theorem}
\label{thm.sum-adj}The sum of all entries of the adjugate matrix
$\operatorname*{adj}C$ is $\left(  \sum_{k=1}^{n}x_{k}+\sum_{k=1}^{n}%
y_{k}\right)  \det C$.
\end{theorem}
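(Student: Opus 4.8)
The plan is to reuse Lemma \ref{lem.AB} in exactly the same way as in the proof of Theorem \ref{thm.sum-inv}, but with the inverse $C^{-1}$ replaced by the adjugate $\operatorname*{adj}C$. The point is that the only property of $C^{-1}$ used in that proof was the pair of identities $CC^{-1}=C^{-1}C=I_{n}$, and these have a perfect analogue for the adjugate that needs no invertibility: over any commutative ring, one has the Laplace-expansion identities $C\cdot\operatorname*{adj}C=\operatorname*{adj}\left(C\right)\cdot C=\left(\det C\right)I_{n}$.

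Concretely, first I would apply Lemma \ref{lem.AB} to $m=n$, $A=C$ and $B=\operatorname*{adj}C$, which gives
\[
\sum_{i=1}^{n}\ \ \sum_{j=1}^{n}\left(x_{i}+y_{j}\right)C_{i,j}\left(\operatorname*{adj}C\right)_{j,i}=\sum_{i=1}^{n}x_{i}\left(C\cdot\operatorname*{adj}C\right)_{i,i}+\sum_{j=1}^{n}y_{j}\left(\operatorname*{adj}\left(C\right)\cdot C\right)_{j,j}.
\]
Then I would rewrite the right hand side using the adjugate identities above: each diagonal entry $\left(C\cdot\operatorname*{adj}C\right)_{i,i}$ and $\left(\operatorname*{adj}\left(C\right)\cdot C\right)_{j,j}$ equals $\det C$, so the right hand side collapses to $\left(\sum_{k=1}^{n}x_{k}+\sum_{k=1}^{n}y_{k}\right)\det C$. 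On the left hand side, the factor $\left(x_{i}+y_{j}\right)C_{i,j}$ simplifies to $1$ exactly as before (since $C_{i,j}=\dfrac{1}{x_{i}+y_{j}}$), leaving $\sum_{i=1}^{n}\ \sum_{j=1}^{n}\left(\operatorname*{adj}C\right)_{j,i}$, which is the sum of all entries of $\operatorname*{adj}C$. Comparing the two sides proves the claim.

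The appealing feature of this route is that there is essentially no obstacle: the argument is a line-for-line copy of the proof of Theorem \ref{thm.sum-inv}, and the only new ingredient --- the identities $C\operatorname*{adj}C=\operatorname*{adj}\left(C\right)C=\left(\det C\right)I_{n}$ --- is a standard fact valid over every commutative ring, so the invertibility hypothesis genuinely disappears. The one thing I would double-check is the bookkeeping of which product ($C\operatorname*{adj}C$ versus $\operatorname*{adj}\left(C\right)C$) supplies the $x_{i}$-term and which supplies the $y_{j}$-term, since Lemma \ref{lem.AB} assigns $\left(AB\right)_{i,i}$ to $x_{i}$ and $\left(BA\right)_{j,j}$ to $y_{j}$; but both products equal $\left(\det C\right)I_{n}$, so this distinction is immaterial here. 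An alternative, less satisfying proof would deduce Theorem \ref{thm.sum-adj} from Theorem \ref{thm.sum-inv} by a permanence-of-identities argument (both sides are rational functions in the $x_{i},y_{j}$ that agree whenever $\det C\neq0$, hence agree identically), but the direct computation above is cleaner and never leaves the realm of elementary algebra.
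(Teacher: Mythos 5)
Your proposal is correct and is precisely the argument the paper intends: it applies Lemma \ref{lem.AB} with $A=C$ and $B=\operatorname*{adj}C$ and replaces the identities $CC^{-1}=C^{-1}C=I_{n}$ by $C\cdot\operatorname*{adj}C=\operatorname*{adj}\left(C\right)\cdot C=\left(\det C\right)I_{n}$, exactly as the paper's (sketched) proof indicates. Nothing to add.
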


\begin{proof}
Similar to our above proof of Theorem \ref{thm.sum-inv}.\footnote{I wrote up
this proof in much more detail in \cite[solution to Exercise 6.69
\textbf{(a)}]{detnotes}.} Use the classical result that $C\cdot
\operatorname*{adj}C=\operatorname*{adj}C\cdot C=\det C\cdot I_{n}$ (where
$I_{n}$ denotes the $n\times n$ identity matrix).
\end{proof}

Theorem \ref{thm.sum-adj} can be transformed even further:

\begin{theorem}
\label{thm.border-det}Let $D$ be the $\left(  n+1\right)  \times\left(
n+1\right)  $-matrix obtained from $C$ by inserting a row full of $1$'s at the
very bottom and a column full of $1$'s at the very right, and putting $0$ in
the bottom-right corner:%
\[
D=\left(
\begin{array}
[c]{ccccc}%
\dfrac{1}{x_{1}+y_{1}} & \dfrac{1}{x_{1}+y_{2}} & \cdots & \dfrac{1}%
{x_{1}+y_{n}} & 1\\
\dfrac{1}{x_{2}+y_{1}} & \dfrac{1}{x_{2}+y_{2}} & \cdots & \dfrac{1}%
{x_{2}+y_{n}} & 1\\
\vdots & \vdots & \ddots & \vdots & \vdots\\
\dfrac{1}{x_{n}+y_{1}} & \dfrac{1}{x_{n}+y_{2}} & \cdots & \dfrac{1}%
{x_{n}+y_{n}} & 1\\
1 & 1 & \cdots & 1 & 0
\end{array}
\right)  .
\]
Then,%
\[
\det D=-\left(  \sum_{k=1}^{n}x_{k}+\sum_{k=1}^{n}y_{k}\right)  \cdot\det C.
\]

\end{theorem}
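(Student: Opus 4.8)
The plan is to deduce Theorem \ref{thm.border-det} from Theorem \ref{thm.sum-adj} by a double cofactor expansion, which has the pleasant side effect of needing no invertibility hypothesis on $C$. Morally, the result is the Schur complement formula: if $C$ were invertible, then bordering it as in $D$ would give $\det D=\det C\cdot\left(0-\mathbf{1}^{T}C^{-1}\mathbf{1}\right)=-\det C\cdot\left(\text{sum of all entries of }C^{-1}\right)$, and Theorem \ref{thm.sum-inv} would finish the job immediately. To avoid assuming $C$ invertible, I would run the same idea through the adjugate instead, expanding $\det D$ directly.

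First I would expand $\det D$ along its last column, whose entries are $1,1,\ldots,1,0$. The bottom entry is $0$, so the only surviving terms come from the rows $i\in\left\{1,2,\ldots,n\right\}$; the row $i$ contributes the sign $\left(-1\right)^{i+\left(n+1\right)}$ times the minor obtained by deleting row $i$ and column $n+1$ from $D$. Deleting column $n+1$ leaves the all-$1$'s bottom row intact (its trailing $0$ is gone), so each such minor is an $n\times n$ determinant whose rows are the rows $\left\{1,\ldots,n\right\}\setminus\left\{i\right\}$ of $C$ followed by an all-$1$'s bottom row, with columns $1,\ldots,n$.

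Next I would expand each of these $n\times n$ determinants along its own all-$1$'s bottom row. The entry in column $j$ is $1$ and contributes the sign $\left(-1\right)^{n+j}$ times the determinant of the array with rows $\left\{1,\ldots,n\right\}\setminus\left\{i\right\}$ and columns $\left\{1,\ldots,n\right\}\setminus\left\{j\right\}$ of $C$, which is exactly the minor $\det\left(C_{\sim i,\sim j}\right)$ in the notation of the adjugate. Multiplying the two accumulated signs gives $\left(-1\right)^{i+\left(n+1\right)}\left(-1\right)^{n+j}=-\left(-1\right)^{i+j}$, so the double expansion collapses to $\det D=-\sum_{i=1}^{n}\sum_{j=1}^{n}\left(-1\right)^{i+j}\det\left(C_{\sim i,\sim j}\right)=-\sum_{i=1}^{n}\sum_{j=1}^{n}\left(\operatorname{adj}C\right)_{j,i}$, where the last step is just the definition $\left(\operatorname{adj}C\right)_{j,i}=\left(-1\right)^{i+j}\det\left(C_{\sim i,\sim j}\right)$. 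As $\left(i,j\right)$ ranges over $\left\{1,\ldots,n\right\}^{2}$ the pair $\left(j,i\right)$ ranges over the same set, so the right-hand side is precisely $-1$ times the sum of all entries of $\operatorname{adj}C$.

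Finally I would invoke Theorem \ref{thm.sum-adj}, according to which that entry sum equals $\left(\sum_{k=1}^{n}x_{k}+\sum_{k=1}^{n}y_{k}\right)\det C$; this yields $\det D=-\left(\sum_{k=1}^{n}x_{k}+\sum_{k=1}^{n}y_{k}\right)\det C$, as claimed. I expect the one genuinely delicate point to be the sign bookkeeping across the two nested expansions — in particular, locating the all-$1$'s row at the bottom of the reduced $n\times n$ array after row $i$ has been deleted, and checking that the net sign $-\left(-1\right)^{i+j}$ lines up with the adjugate convention used in the paper. A quick sanity check at $n=1$ (where $\det D=-1$ and $\left(\operatorname{adj}C\right)_{1,1}=1$) confirms the signs; everything else is routine.
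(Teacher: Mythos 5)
Your proof is correct and follows essentially the same route as the paper's: a double Laplace expansion of $\det D$ reducing it to $-1$ times the entry sum of $\operatorname*{adj}C$, followed by an appeal to Theorem \ref{thm.sum-adj}. The only (immaterial) difference is that you expand along the last column first and then the all-$1$'s row, whereas the paper expands along the last row first and then the last column; your sign bookkeeping, $(-1)^{i+(n+1)}(-1)^{n+j}=-(-1)^{i+j}$, matches the paper's adjugate convention.
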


\begin{proof}
[Proof sketch.]This follows from Theorem \ref{thm.sum-adj} using the following
more general fact: If $A$ is any $n\times n$-matrix, and if $B$ is the
$\left(  n+1\right)  \times\left(  n+1\right)  $-matrix obtained from $A$ in
the same way as $D$ was obtained from $C$ (that is, by inserting a row full of
$1$'s at the very bottom and a column full of $1$'s at the very right, and
putting $0$ in the bottom-right corner), then%
\[
\det B=-s,
\]
where $s$ is the sum of all entries of $\operatorname*{adj}A$. This fact, in
turn, can be proved by Laplace expansion of $\det B$ along the last row
(followed by expanding each cofactor along the last column). We refer to
\cite[solution to Exercise 6.69 \textbf{(c)}]{detnotes} for all details.
\end{proof}

Theorem \ref{thm.border-det} appears in \cite[Chapter XI, Exercise
43]{MuiMet60}; we know nothing more about its origins.

\section{Two little exercises}

For all its aid in our proof, it appears that Lemma \ref{lem.AB} is a
one-trick pony: We are unaware of any other interesting results whose proofs
it simplifies. The sum of all entries of a matrix is not generally a
particularly well-behaved quantity (unlike the sum of its \textbf{diagonal}
entries, which is known as the trace and has many good properties). However,
some experimentation has led us to a surprising (if not very deep) twin to
Theorem \ref{thm.sum-inv}.

We assume that $x_{1},x_{2},\ldots,x_{n}$ and $y_{1},y_{2},\ldots,y_{n}$ are
\textbf{real} numbers (and that $n\geq1$). Consider the $n\times n$-matrix%
\[
F:=\left(  \min\left\{  x_{i},y_{j}\right\}  \right)  _{1\leq i\leq n,\ 1\leq
j\leq n}=\left(
\begin{array}
[c]{cccc}%
\min\left\{  x_{1},y_{1}\right\}  & \min\left\{  x_{1},y_{2}\right\}  & \cdots
& \min\left\{  x_{1},y_{n}\right\} \\
\min\left\{  x_{2},y_{1}\right\}  & \min\left\{  x_{2},y_{2}\right\}  & \cdots
& \min\left\{  x_{2},y_{n}\right\} \\
\vdots & \vdots & \ddots & \vdots\\
\min\left\{  x_{n},y_{1}\right\}  & \min\left\{  x_{n},y_{2}\right\}  & \cdots
& \min\left\{  x_{n},y_{n}\right\}
\end{array}
\right)  .
\]
Thus, $F$ is obtained from $C$ by replacing the \textquotedblleft inverted
sums\textquotedblright\ $\dfrac{1}{x_{i}+y_{j}}$ by the minima $\min\left\{
x_{i},y_{j}\right\}  $\ \ \ \ \footnote{This can be seen as an instance of
tropicalization (see, e.g., \cite{NoumiYamada}). More precisely,
tropicalization (the sort that replaces $+$ and $\cdot$ by $\max$ and $+$)
would replace $\dfrac{1}{x_{i}+y_{j}}$ by $-\max\left\{  x_{i},y_{j}\right\}
$; but this turns into $\min\left\{  x_{i},y_{j}\right\}  $ if we multiply all
our numbers $x_{1},x_{2},\ldots,x_{n},y_{1},y_{2},\ldots,y_{n}$ by $-1$.}. It
would almost be too much to ask for $F^{-1}$ to have properties comparable to
those of $C^{-1}$. But in fact, it behaves even better:

\begin{proposition}
Assume that $F$ is invertible. Then:

\begin{enumerate}
\item[\textbf{(a)}] The sum of all entries of $F^{-1}$ is $\dfrac{1}%
{\min\left\{  x_{1},x_{2},\ldots,x_{n},y_{1},y_{2},\ldots,y_{n}\right\}  }$.

\item[\textbf{(b)}] Assume that $x_{1}\leq x_{2}\leq\cdots\leq x_{n}$ and
$y_{1}\leq y_{2}\leq\cdots\leq y_{n}$ and $x_{1}\leq y_{1}$. Then, for each
$j\in\left\{  1,2,\ldots,n\right\}  $, the sum of all entries in the $j$-th
column of $F^{-1}$ is $\dfrac{1}{x_{1}}$ if $j=1$, and is $0$ if $j>1$.
\end{enumerate}
\end{proposition}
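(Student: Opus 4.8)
The plan is to reduce both parts to a single structural observation: a \emph{constant} row or column of $F$ forces a very sparse corresponding line in $F^{-1}$. Throughout, I write $\mathbf{1}=(1,1,\ldots,1)^{T}\in\RR^{n}$ for the all-ones column vector and $e_{1},\ldots,e_{n}$ for the standard basis vectors, so that the sum of all entries of $F^{-1}$ equals $\mathbf{1}^{T}F^{-1}\mathbf{1}$, while the vector of column sums of $F^{-1}$ equals the row vector $\mathbf{1}^{T}F^{-1}$ (its $j$-th entry being $\sum_{i}\left(F^{-1}\right)_{i,j}$). The key remark is elementary: if the $p$-th row of $F$ is the constant vector $c\,\mathbf{1}^{T}$ for some scalar $c$, i.e.\ $e_{p}^{T}F=c\,\mathbf{1}^{T}$, then right-multiplying by $F^{-1}$ gives $\mathbf{1}^{T}F^{-1}=\frac{1}{c}\,e_{p}^{T}$; dually, if the $q$-th column of $F$ is $c\,\mathbf{1}$, i.e.\ $Fe_{q}=c\,\mathbf{1}$, then left-multiplying by $F^{-1}$ gives $F^{-1}\mathbf{1}=\frac{1}{c}\,e_{q}$.

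For part \textbf{(b)}, the hypotheses do all the work. Since $x_{1}\leq y_{1}\leq y_{2}\leq\cdots\leq y_{n}$, we have $x_{1}\leq y_{j}$ and hence $\min\{x_{1},y_{j}\}=x_{1}$ for every $j$; thus the first row of $F$ is the constant vector $x_{1}\,\mathbf{1}^{T}$. Applying the key remark with $p=1$ and $c=x_{1}$ yields $\mathbf{1}^{T}F^{-1}=\frac{1}{x_{1}}\,e_{1}^{T}$. Reading off the $j$-th entry of this row vector gives exactly the claimed column sums: $\frac{1}{x_{1}}$ for $j=1$ and $0$ for $j>1$.

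For part \textbf{(a)}, I set $m=\min\{x_{1},\ldots,x_{n},y_{1},\ldots,y_{n}\}$, and observe that this minimum is attained either by some $x_{p}$ or by some $y_{q}$ (no sorting or relabelling is needed). If $m=x_{p}$, then $m\leq y_{j}$ for all $j$, so $\min\{x_{p},y_{j}\}=m$ for all $j$; that is, the $p$-th row of $F$ is the constant vector $m\,\mathbf{1}^{T}$, and the key remark gives $\mathbf{1}^{T}F^{-1}=\frac{1}{m}\,e_{p}^{T}$, whence $\mathbf{1}^{T}F^{-1}\mathbf{1}=\frac{1}{m}$. If instead $m=y_{q}$, then $m\leq x_{i}$ for all $i$, so the $q$-th column of $F$ is $m\,\mathbf{1}$, and dually $F^{-1}\mathbf{1}=\frac{1}{m}\,e_{q}$, again giving $\mathbf{1}^{T}F^{-1}\mathbf{1}=\frac{1}{m}$. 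Either way the sum of all entries of $F^{-1}$ is $\frac{1}{m}$, as desired.

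I expect no serious obstacle: unlike the proof of Theorem \ref{thm.sum-inv}, this argument does not even invoke Lemma \ref{lem.AB}, relying only on the fact that $\min\{x_{i},y_{j}\}$ is constant along a row or column as soon as one of the two indices is pinned to an entry dominated by all the others. The one point deserving a word of care is that the relevant scalar ($x_{1}$ in part \textbf{(b)}, and $m$ in part \textbf{(a)}) is nonzero, so that the division is legitimate; but this is forced by the invertibility of $F$, since a row or column of $F$ equal to the zero vector would make $\det F=0$. Hence the constant $x_{1}$ (resp.\ $m$), being the common value along that row (resp.\ row or column), cannot vanish.
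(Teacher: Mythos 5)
Your proof is correct and rests on essentially the same observation as the paper's own sketch: under the hypotheses of (b) the first row of $F$ is the constant vector $x_{1}\mathbf{1}^{T}$, so $\mathbf{1}^{T}F^{-1}=\frac{1}{x_{1}}e_{1}^{T}$, which gives the column sums at once (and the nonvanishing of $x_{1}$ is forced by invertibility, just as in the paper). The only divergence is in part (a), where the paper reduces to (b) by a WLOG relabelling of the $x$'s and $y$'s (possibly swapping the two families), whereas you argue directly by locating the global minimum in either a row or a column of $F$ -- a slightly cleaner variant of the same idea.
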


\begin{noncompile}
Proof idea:

\textbf{(b)} For each $j\in\left\{  1,2,\ldots,n\right\}  $, we have%
\[
\delta_{1,j}=\left(  FF^{-1}\right)  _{1,j}=\sum_{k=1}^{n}\underbrace{F_{1,k}%
}_{\substack{=\min\left\{  x_{1},y_{k}\right\}  \\=x_{1}}}\left(
F^{-1}\right)  _{k,j}=x_{1}\sum_{k=1}^{n}\left(  F^{-1}\right)  _{k,j}.
\]
By plugging $j=1$, we conclude that $x_{1}\neq0$. Thus, we can divide this
equality by $x_{1}$ and find%
\[
\sum_{k=1}^{n}\left(  F^{-1}\right)  _{k,j}=\dfrac{\delta_{1,j}}{x_{j}}.
\]
But this is precisely the claim of \textbf{(b)}.

\textbf{(a)} We can WLOG assume that $x_{1}\leq x_{2}\leq\cdots\leq x_{n}$
(else, permute the $x$'s) and $y_{1}\leq y_{2}\leq\cdots\leq y_{n}$ (else,
permute the $y$'s) and $x_{1}\leq y_{1}$ (else, interchange the $x$'s with the
$y$'s). Now apply part \textbf{(b)}.
\end{noncompile}

The proof of this proposition is another neat exercise in working with inverse
matrices -- one we do not want to spoil for the reader. As with $C$, computing
the determinant is not necessary. However, it is computable, and the result is
another nice exercise:

\begin{proposition}
Assume that $x_{1}\leq x_{2}\leq\cdots\leq x_{n}$ and $y_{1}\leq y_{2}%
\leq\cdots\leq y_{n}$. For any $i,j\in\left\{  1,2,\ldots,n\right\}  $, set
$f_{i,j}:=\min\left\{  x_{i},y_{j}\right\}  $. Then,%
\begin{equation}
\det F=f_{1,1}\cdot\prod_{k=2}^{n}\left(  f_{k,k}-f_{k,k-1}-f_{k-1,k}%
+f_{k-1,k+1}\right)  . \label{eq.detF=}%
\end{equation}

\end{proposition}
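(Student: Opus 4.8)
The plan is to reduce $\det F$ to the determinant of a matrix whose nonzero entries are so tightly constrained that only the main diagonal survives in its Leibniz expansion. Let $T$ be the $n\times n$ lower bidiagonal matrix with $1$'s on the diagonal and $-1$'s on the subdiagonal; then left-multiplication by $T$ performs the row operations $R_k\mapsto R_k-R_{k-1}$ and right-multiplication by $T^{T}$ performs the analogous column operations, so $H:=TFT^{T}$ satisfies $\det H=\det F$ (as $\det T=1$). I would first compute the entries of $H$: writing $f_{i,j}=\min\{x_i,y_j\}$, one finds $H_{1,1}=f_{1,1}$, then $H_{1,j}=f_{1,j}-f_{1,j-1}$ and $H_{i,1}=f_{i,1}-f_{i-1,1}$ for $i,j\ge 2$, and the interior entries are the mixed second differences $H_{i,j}=f_{i,j}-f_{i-1,j}-f_{i,j-1}+f_{i-1,j-1}$ for $i,j\ge 2$. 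In particular the diagonal entries of $H$ are exactly the factors on the right-hand side of \eqref{eq.detF=} (which also identifies the last summand there: it should read $f_{k-1,k-1}$), so it suffices to prove $\det H=\prod_{i=1}^{n}H_{i,i}$.

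The crucial step is a geometric reading of these entries. Setting $x_0=y_0=-\infty$ and, for $1\le i\le n$, $I_i:=[\,x_{i-1},x_i\,]$ and $J_i:=[\,y_{i-1},y_i\,]$ (so $I_1,J_1$ are half-lines), I would check -- by a short case analysis, or via the telescoping integral representation of $\min$ -- that $H_{i,j}=\lvert I_i\cap J_j\rvert$, the length of the overlap of the two intervals, for every $(i,j)\neq(1,1)$; the corner $H_{1,1}=f_{1,1}$ is the sole exception. Since $x_1\le\cdots\le x_n$ and $y_1\le\cdots\le y_n$, the $I_i$ are consecutive intervals (overlapping only in endpoints), and likewise the $J_j$, so $H$ is essentially an interval-overlap matrix, with $H_{i,j}\neq 0$ only when $I_i$ and $J_j$ genuinely meet.

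The main step is then to expand $\det H=\sum_{\sigma}\operatorname{sgn}(\sigma)\prod_i H_{i,\sigma(i)}$ and show the identity permutation is the only nonzero summand. Suppose $\sigma\neq\mathrm{id}$ contributed. Then $\sigma$ has an inversion: indices $i<j$ with $q:=\sigma(i)>\sigma(j)=:p$. Since $q>p\ge 1$ forces $q\ge 2$, and $j>i\ge1$ forces $j\ge 2$, both $H_{i,q}$ and $H_{j,p}$ are honest overlaps (the half-line cases $i=1$ or $p=1$ causing no trouble), so $\lvert I_i\cap J_q\rvert>0$ and $\lvert I_j\cap J_p\rvert>0$, whence $x_i>y_{q-1}$ and $y_p>x_{j-1}$. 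But $i<j$ gives $x_i\le x_{j-1}$ and $p<q$ gives $y_p\le y_{q-1}$, so $x_i>y_{q-1}\ge y_p>x_{j-1}\ge x_i$, a contradiction. Hence only $\sigma=\mathrm{id}$ survives and $\det H=\prod_{i=1}^{n}H_{i,i}$, which is the desired formula.

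The routine parts are the entry computation for $H$ and the elementary identity equating the mixed second difference of $\min$ with an interval overlap. The one place needing care -- and where I expect the only real subtlety -- is the bookkeeping around the first row and column: the overlap interpretation fails exactly at the corner $(1,1)$ because $I_1,J_1$ are half-lines, so one must confirm this never interferes. It does not: the corner entry is used only by the identity permutation, where it supplies precisely the factor $f_{1,1}$, while the inversion argument only ever invokes entries $H_{i,q}$ with $q\ge 2$ and $H_{j,p}$ with $j\ge 2$, both genuine finite overlaps. A secondary point is to ensure that ``nonzero entry'' really does force the strict inequalities $x_i>y_{q-1}$ and $y_p>x_{j-1}$ even when some $x$'s or $y$'s coincide; since a positive-length overlap already delivers these strict inequalities, non-strict monotonicity suffices and no genericity or continuity argument is needed.
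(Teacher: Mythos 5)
Your proof is correct, and it rightly emends the statement along the way: the summand $f_{k-1,k+1}$ in (\ref{eq.detF=}) must be a typo for $f_{k-1,k-1}$ (as printed it is undefined at $k=n$), and your diagonal entries $H_{k,k}$ are exactly the intended factors. The paper leaves this proposition as an exercise, but its sketched solution starts the same way you do, passing to the doubly differenced matrix; the divergence is in how one then sees that the determinant equals the product of the diagonal entries. The paper's route is structural: if $x_k\leq y_k$ then the differenced matrix vanishes at all positions $(i,j)$ with $i\leq k<j$, and if $y_k\leq x_k$ it vanishes at all positions with $j\leq k<i$, so at every index the matrix is block-triangular in one direction or the other, and iterating the block-triangular determinant formula peels off the diagonal entries one by one. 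You instead read each entry of $H$ away from the $(1,1)$ corner as the overlap length $\left\vert I_i\cap J_j\right\vert$ of consecutive intervals and kill every non-identity permutation in the Leibniz expansion by extracting an inversion and deriving the chain $x_i>y_{q-1}\geq y_p>x_{j-1}\geq x_i$. The two arguments encode the same information -- your ``two positive overlaps'' contradiction is precisely why the paper's zero pattern holds -- but yours compresses the case analysis into a single inequality chain with no induction or block decomposition; the author even remarks that ``there might be a more elegant way to see this,'' and your interval-overlap reading is a convincing candidate. Your attention to the $(1,1)$ corner (used only by the identity permutation) and to the strictness of the inequalities under merely non-strict monotonicity addresses exactly the two points where a sloppier version of this argument would leak.
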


\begin{noncompile}
Proof idea: Subtract each row of $F$ from the next row and each column from
the next column (and do this all simultaneously). Let $F^{\prime}$ be the
resulting matrix. Then, $\det\left(  F^{\prime}\right)  =\det F$. Now we want
to show that $\det\left(  F^{\prime}\right)  $ equals the product of the
diagonal entries of $F^{\prime}$.

To do so, we observe that the entries $f_{i,j}^{\prime}$ of $F^{\prime}$ satisfy:

\begin{itemize}
\item If $x_{k}\leq y_{k}$, then $f_{i,j}^{\prime}=0$ for all $i\leq k$ and
$j>k$.

\item If $y_{k}\leq x_{k}$, then $f_{i,j}^{\prime}=0$ for all $i>k$ and $j\leq
k$.
\end{itemize}

Hence, $\det\left(  F^{\prime}\right)  $ can be computed by iteratively
applying the \textquotedblleft determinant of block-triangular
matrix\textquotedblright\ formula, and at the end you get the product of all
diagonal entries.

There might be a more elegant way to see this.
\end{noncompile}

Note that the product on the right hand side of (\ref{eq.detF=}) will often be
$0$ if the $x_{i}$ and the $y_{j}$'s are ordered in an \textquotedblleft
insufficiently balanced\textquotedblright\ way (e.g., if there are more than
two $y_{j}$'s between two consecutive $x_{i}$'s). We leave it to the reader to
establish more precise criteria for $\det F$ to be $0$.

\end{document}